\newlength{\numone}
\newlength{\widone}
\newlength{\numtwo}
\newlength{\widtwo}
\newcounter{countp}
\newtheorem{thm}{Theorem}
\newtheorem{thmp}[countp]{Theorem}
\newtheorem{lemma}[thm]{Lemma}
\theoremstyle{definition}
\newtheorem{defin}[thm]{Definition}
\newtheorem{alg}[thm]{Algorithm}
\newtheorem{rem}[thm]{Remark}
\numberwithin{equation}{section}
\author{\Large{Riccardo W. Maffucci\footnote{\textsc{EPFL, MA SB Batiment 8, Lausanne, Switzerland. }\texttt{riccardo.maffucci@epfl.ch}.}}}
\title{\Large{\uppercase{\bf Constructing certain families of $\mathbf{3}$-polytopal graphs}}}
\date{}
\def\calP{\mathcal{P}}
\def\calQ{\mathcal{Q}}
\def\calR{\mathcal{R}}
\begin{document}
\titleformat{\section}
  {\Large\scshape}{\thesection}{1em}{}
\titleformat{\subsection}
  {\large\scshape}{\thesubsection}{1em}{}
\maketitle


\begin{abstract}
Let $n\geq 3$ and $r_n$ be a $3$-polytopal graph such that for every $3\leq i\leq n$, $r_n$ has at least one vertex of degree $i$. We find the minimal vertex count for $r_n$. We then describe an algorithm to construct the graphs $r_n$. A dual statement may be formulated for faces of $3$-polytopes. The ideas behind the algorithm generalise readily to solve related problems.

Moreover, given a $3$-polytope $t_l$ comprising a vertex of degree $i$ for all $3\leq i\leq l$, $l$ fixed, we define an algorithm to output for $n>l$ a $3$-polytope $t_n$ comprising a vertex of degree $i$, for all $3\leq i\leq n$, and such that the initial $t_l$ is a subgraph of $t_n$. The vertex count of $t_n$ is asymptotically optimal, in the sense that it matches the aforementioned minimal vertex count up to order of magnitude, as $n$ gets large. In fact, we only lose a small quantity on the coefficient of the second highest term, and this quantity may be taken as small as we please, with the tradeoff of first constructing an accordingly large auxiliary graph.

\end{abstract}
{\bf Keywords:} Algorithm, planar graph, degree sequence, valency, $3$-polytope, polyhedron.
\\
{\bf MSC(2010):} 05C85, 52B05, 52B10, 05C07, 05C10, 05C75.


\section{Introduction}
\subsection{The question}
Graphs that are planar and $3$-connected have the nice property of being $1$-skeletons of $3$-polytopes, as proven by Rademacher-Steinitz (see e.g. \cite[Theorem 11.6]{harary}). We call these graphs $3$-polytopal graphs, or $3$-polytopes, or polyhedra interchangeably. These special planar graphs are \textit{uniquely} embeddable in a sphere (as observed by Whitney, see e.g. \cite[Theorem 11.5]{harary}). Their regions are also called `faces', and are delimited by cycles (polygons) \cite[Proposition 4.26]{dieste}.

Our starting point is the following question. Let $n\geq 3$ and $G'$ be a $3$-polytopal graph such that for every $3\leq i\leq n$, $G'$ has at least one $i$-gonal face. What is the minimal number of faces for $G'$?

In what follows, we will work on the \textit{dual} problem. Indeed, it is well-known that $3$-polytopes have a \textit{unique} dual graph, that is also $3$-polytopal (see e.g. \cite[Chapter 11]{harary}).

\begin{defin}
	\label{def:1}
A $3$-polytope has the \textit{property $\calP_n$} if it has at least one vertex of degree $i$, for each $3\leq i\leq n$, and moreover it has minimal order (number of vertices) among $3$-polytopes satisfying this condition.
\end{defin}

The notation $H\prec G$ indicates that $H$ is a subgraph of $G$. Our first result is the following.
\setcounter{countp}{\thethm}
\begin{thm}
\label{thm:1}
Let $n\geq 3$ and $G$ be a $3$-polytopal graph with at least one vertex of degree $i$, for every $3\leq i\leq n$. Then the minimal number $p(n)$ of vertices of $G$ is
\begin{equation}
\label{eqn:p}
p(n)=\left\lceil\frac{n^2-11n+62}{4}\right\rceil, \qquad\forall n\geq 14.
\end{equation}
For $n\leq 13$, we have the values in Table \ref{tab:1}.
\begin{table}[h!]
	\centering
	$\begin{array}{|c||c|c|c|c|c|c|c|}
	\hline
	n&3\leq n\leq 7
	&8&9&10&11&12&13\\\hline
	p(n)&n+1
	&10&11&14&16&19&23\\\hline
	\end{array}$
	\caption{Values of $p(n)$ for $n\leq 13$.}
	\label{tab:1}
\end{table}
\\
Moreover, starting from $n=14$ and for every $n\geq 16$, Algorithm \ref{alg:1} constructs a $3$-polytope $r_n$ satisfying $\calP_n$ and the relations
\begin{equation}
\label{eqn:sub}
r_n\succ
\begin{cases}
r_{n-2} & \text{ if } n\equiv 0 \pmod 4,
\\r_{n-3} & \text{ if } n\equiv 1 \pmod 4,
\\r_{n-4} & \text{ if } n\equiv 2 \pmod 4,
\\r_{n-5} & \text{ if } n\equiv 3 \pmod 4
\end{cases}
\qquad n\geq 16.
\end{equation}
Graphs satisfying $\calP_n$ for $3\leq n\leq 15$ are depicted in Figures \ref{fig:small}, \ref{fig:8-13}, \ref{fig:14}, and \ref{fig:15}.
\end{thm}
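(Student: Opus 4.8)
The plan is to prove the formula in two directions: a lower bound on $p(n)$ valid for all $n$, and a matching construction (Algorithm~\ref{alg:1}) achieving it for $n\geq 14$, with the small cases $n\leq 13$ checked by hand against Figures~\ref{fig:small}--\ref{fig:15}.

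First I would establish the lower bound. Suppose $G$ is a $3$-polytope on $p$ vertices with a vertex of degree $i$ for every $3\leq i\leq n$. Let $V_i$ denote the set of vertices of degree $i$; we know $|V_i|\geq 1$ for $3\leq i\leq n$, and of course the maximum degree is at least $n$, so $p\geq n+1$ in particular. The key constraint is the edge count: $2|E|=\sum_i i|V_i|\geq \sum_{i=3}^{n} i + (\text{contribution of the remaining }p-(n-2)\text{ vertices})\geq \binom{n+1}{2}-3 + 3\bigl(p-(n-2)\bigr)$, since every vertex has degree at least $3$. Combined with the planarity bound $|E|\leq 3p-6$, this gives $6p-12\geq \binom{n+1}{2}-3+3p-3(n-2)$, i.e. $3p\geq \frac{n^2+n}{2}-3-3n+6+12-\ldots$, which after simplification yields a quadratic lower bound on $p$ of the shape $\frac{n^2-cn+d}{4}$. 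I expect one must be more careful than this crude estimate: the vertex of maximum degree $n$ has $n$ neighbours, and these neighbours, together with the high-degree vertices $V_{n-1}, V_{n-2},\dots$, cannot all coincide, so a more refined counting (tracking how many distinct vertices are forced, and noting that the degree-$n$ vertex already ``uses up'' $n$ incident edges whose other endpoints must be accounted for) should sharpen the constant to exactly $\frac{n^2-11n+62}{4}$, with the ceiling coming from the parity of $n^2-11n$. The small-$n$ table values, which do \emph{not} follow the formula, indicate that for $n\leq 13$ the binding constraint is different (essentially $p\geq n+1$ together with low-order combinatorial obstructions), and these I would simply verify case by case using the exhibited figures.

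Next, the construction. Here Algorithm~\ref{alg:1} must, given $n\geq 14$, build a $3$-polytope $r_n$ on exactly $p(n)$ vertices realising all degrees $3,\dots,n$, and moreover do so incrementally so that $r_n$ contains the earlier $r_{n-k}$ as a subgraph, with $k\in\{2,3,4,5\}$ depending on $n\bmod 4$. The natural idea is to work with a planar embedding and repeatedly perform local operations that raise the maximum degree while introducing only a controlled number of new vertices: e.g. adding a vertex inside a face and joining it to several boundary vertices (which simultaneously creates a new low-degree vertex and bumps up the degrees of the boundary vertices toward the target high degrees), or ``stacking'' configurations that add a vertex of the next required degree. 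The periodicity mod~$4$ in \eqref{eqn:sub} strongly suggests a four-phase cycle of such moves; each full cycle should increase $n$ by $4$ while adding roughly $\frac{2n}{4}=\frac{n}{2}$ vertices, which is consistent with $p(n)-p(n-4)\approx \frac{(n)^2-(n-4)^2}{4}=2n-4$ spread over the intermediate steps. One checks that $3$-connectivity and planarity are preserved by each move (this is where drawing the embedding carefully matters), and that the vertex count matches $p(n)$ exactly — not merely up to lower order — by bookkeeping the increments against \eqref{eqn:p}.

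The main obstacle, I expect, is the \emph{exact} matching in the construction: it is comparatively easy to get the right order of magnitude, but hitting $\lceil (n^2-11n+62)/4\rceil$ on the nose requires that the local moves be maximally efficient — every new vertex must pull its weight, either by having a needed degree itself or by being forced anyway as a neighbour of a high-degree vertex. This is also presumably why the theorem only claims the construction ``starting from $n=14$ and for every $n\geq 16$'': the moves that realise the recursion \eqref{eqn:sub} need a large enough starting polytope to have room to operate, so $n=14,15$ (Figures~\ref{fig:14},~\ref{fig:15}) are seeded directly and the recursion takes over from $n=16$. A secondary technical point is verifying $3$-connectivity is never destroyed; since all operations are vertex-insertions into faces joined to $\geq 3$ boundary vertices, this should follow from a short lemma, but it must be stated. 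Finally, the matching of the lower bound and the construction is what pins down that $p(n)$ equals the formula (rather than merely being bounded by it) for $n\geq 14$, so the two halves must be shown to meet with no gap.
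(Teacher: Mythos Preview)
Your overall two-part strategy (lower bound plus explicit construction) is the right one and matches the paper, but there is a genuine gap in the lower-bound half.

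The handshaking-plus-planarity inequality you write down, namely $2|E|\geq \binom{n+1}{2}-3+3(p-(n-2))$ together with $|E|\leq 3p-6$, does \emph{not} simplify to something of the shape $(n^2-cn+d)/4$: it gives only
\[
p\geq\left\lceil\frac{n^2-5n+30}{6}\right\rceil,
\]
which is strictly weaker than \eqref{eqn:p} for all $n\geq 8$ (for instance at $n=14$ it yields $p\geq 26$ by luck, but at $n=20$ it gives $p\geq 55$ versus the true $p(20)=61$). Your proposed refinement---counting distinct neighbours of the degree-$n$ vertex and arguing they cannot coincide with the other high-degree vertices---will not close this gap; at best it recovers $p\geq n+1$ or minor variants thereof. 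The missing ingredient is a nontrivial inequality of Bowen and Chv\'atal on degree sequences of planar graphs: for any $3\leq k\leq (p+4)/3$,
\[
\sum_{i=1}^{k} d_i \leq 2p + 6k - 16,
\]
where $d_1\geq d_2\geq\cdots$ are the degrees in decreasing order. Applying this with $k=n-5$ (so that the left-hand side is at least $\sum_{i=6}^{n} i$) is exactly what produces the denominator $4$ and the constants $-11$, $62$. Without this external result, your lower bound stalls at the $/6$ inequality and the theorem cannot be proved.

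On the construction side your instincts are broadly correct---the paper does build $r_n$ by inserting explicit planar ``pieces'' into triangular faces of a smaller $r_{n'}$, with a four-step cycle governing the residues mod $4$---but your sketch is too vague to verify the exact vertex count. The paper's algorithm specifies concrete graphs $\text{pc}_k$, $\text{end}_{0;n}$, $\text{end}_{1;n}$ (each built from a fixed small template plus a parametrised number of repeated triangle-splittings) and then checks the degree and order arithmetic line by line; the $3$-connectivity is automatic because every operation is a triangle splitting. Getting the increments to match $p(n)-p(n-4)=2n-15$ exactly is indeed the delicate part, and it requires designing the pieces so that each newly added degree-$3$ vertex simultaneously raises the degrees of \emph{two} existing high-degree vertices, not one.
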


Theorem \ref{thm:1} will be proven in sections \ref{sec:lb}, \ref{sec:sm}, and \ref{sec:alg1}. Passing to the duals, we can answer the original question.
\begin{thmp}
If $n\geq 3$ and $G'$ is a $3$-polytopal graph with at least one $i$-gonal face, for every $3\leq i\leq n$, then the minimal number $p(n)$ of faces for $G'$ is given by \eqref{eqn:p} and Table \ref{tab:1}.
\end{thmp}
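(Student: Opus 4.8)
The plan is to obtain Theorem $1'$ from Theorem \ref{thm:1} by nothing more than passing to dual graphs, so the argument is essentially bookkeeping. Recall, as noted in the introduction (see e.g. \cite[Chapter 11]{harary}), that every $3$-polytopal graph $G$ has a unique dual $G^{*}$, itself $3$-polytopal, and that dualising is an involution: $(G^{*})^{*}=G$. Under this correspondence the faces of $G$ are in bijection with the vertices of $G^{*}$, an $i$-gonal face of $G$ matching a vertex of degree $i$ in $G^{*}$; in particular the number of faces of $G$ equals the order of $G^{*}$, and $G$ has an $i$-gonal face if and only if $G^{*}$ has a vertex of degree $i$.

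For the lower bound I would start from an arbitrary $3$-polytopal graph $G'$ having at least one $i$-gonal face for every $3\leq i\leq n$. Then $(G')^{*}$ is a $3$-polytopal graph with at least one vertex of degree $i$ for every $3\leq i\leq n$, so Theorem \ref{thm:1} forces its order to be at least $p(n)$, the quantity in \eqref{eqn:p} and Table \ref{tab:1}. Since the order of $(G')^{*}$ equals the number of faces of $G'$, the latter is at least $p(n)$.

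For the reverse inequality, Theorem \ref{thm:1} furnishes a $3$-polytope $H$ of order exactly $p(n)$ having at least one vertex of degree $i$ for every $3\leq i\leq n$. Its dual $H^{*}$ is then a $3$-polytope with exactly $p(n)$ faces and at least one $i$-gonal face for every $3\leq i\leq n$, so the bound $p(n)$ is attained and hence is the minimum. I do not expect a genuine obstacle anywhere: the only point to handle with care is that duality exchanges vertices and faces while being an involution, so a polytope of minimal vertex count on one side yields a polytope of minimal face count on the other, with the numerical value $p(n)$ unchanged.
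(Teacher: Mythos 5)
Your proposal is correct and is exactly the paper's argument: Theorem 1$'$ is obtained from Theorem \ref{thm:1} by passing to the unique $3$-polytopal dual, under which $i$-gonal faces correspond to degree-$i$ vertices and the face count equals the dual's order, giving both the lower bound and its attainment. No further comment is needed.
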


\subsection{A related problem}
In our next result, given a $3$-polytope $H$ containing vertices of valencies $\{3,4,\dots, l\}$, $l\geq 5$, and an integer $n>l$, we aim to construct a $3$-polytope $G$ containing a copy of $H$ as subgraph, and comprising vertices of degrees $\{3,4,\dots, n\}$. We start with the following definition.
\begin{defin}
	\label{def:2}
Let $n\geq 5$ be odd. We say that a $3$-polytope satisfies \textit{property $\calQ_n$} if there is at least one vertex of degree $3$, and moreover the polytope contains among its faces the triangles
\begin{equation}
\label{eqn:faces}
F_{n;j}=\{v_{n;j;1}, v_{n;j;2}, v_{n;j;3}\}, \quad 1\leq j\leq(n-3)/2
\end{equation}
where
\begin{equation}
\label{eqn:odd}
\{\deg(v_{n;j;1}) : 1\leq j\leq(n-3)/2\} \cup \{\deg(v_{n;j;2}) : 1\leq j\leq(n-3)/2\}=\{4,5,\dots,n\}
\end{equation}
and
\[v_{n;j;3}\neq v_{n;i;1},v_{n;i;2},  \quad\forall j,i.\]
\end{defin}

Note that $\calQ_n$ together with minimality w.r.t. order is stronger than property $\calP_n$ of Definition \ref{def:1}.

\begin{thm}
	\label{thm:2}
Let $l\geq 5$ be odd, and $t_l$ be a $3$-polytope satisfying $\calQ_l$. Fix an integer $m\geq 14$, $m\equiv 2 \pmod 4$, and let $n:=l+mk$, where $k$ is a non-negative integer. Then there exists a sequence of $3$-polytopes
\[\{t_n\}_{n=l+mk, \ k\geq 1},\]
where each $t_n$ satisfies $\calQ_{n}$, such that along the sequence, for all $\epsilon>0$, one has
\begin{equation}
\label{eqn:ordt}
|V(t_n)|\leq\frac{n^2}{4}-\frac{11n}{4}+\left(\frac{5}{2m}+\epsilon\right)n
\end{equation}
as $n\to\infty$. Moreover, for all $k\geq 1$, it holds that
\begin{equation}
\label{eqn:subt}
t_{l+m(k-1)}\prec t_{l+mk}.
\end{equation}
For chosen $t_l$ and $N\geq 1$, Algorithm \ref{alg:2} constructs $t_{l+mk}$ for $1\leq k\leq N$.
\end{thm}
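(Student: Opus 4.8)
The plan is to present Algorithm \ref{alg:2} as the $N$-fold iteration of a single \emph{extension step} $t_{l'}\mapsto t_{l'+m}$, and then to prove Theorem \ref{thm:2} by induction on $k$: assuming $t_{l'}$ (with $l'=l+m(k-1)$, which is odd since $l$ is odd and $m$ is even, so that $\calQ_{l'}$ even makes sense) satisfies $\calQ_{l'}$ and has the asserted order, one extension step produces $t_{l'+m}\succ t_{l'}$ satisfying $\calQ_{l'+m}$ with the order increased by the right amount. Thus almost everything reduces to analysing one step.

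For the step I would do the following. Fix a triangular face $\Delta^\ast$ of $t_{l'}$ whose three corners are \emph{not} among the vertices realising the degrees $4,\dots,l'$ (I would carry ``$t_{l'}$ has such a face, together with a stock of vertices disjoint from all degree-realising vertices'' along as part of the inductive hypothesis; for the base case $k=1$ a one-time $O(1)$-size modification fixes this if $\calQ_l$ does not already supply it, and thereafter the property is produced for free). Into $\Delta^\ast$, \emph{without deleting any of its three boundary edges} --- which makes the subgraph relation \eqref{eqn:subt} automatic --- I would glue a planar \emph{gadget} $\Gamma(l',m)$, a triangulated disc bounded by the cycle $\partial\Delta^\ast$. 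Its interior carries $m$ brand-new ``apex'' vertices of degrees $l'+1,l'+2,\dots,l'+m$, grouped into $m/2$ new triangular faces, each consisting of exactly two apices and one newly created degree-$3$ vertex; since the apices and these degree-$3$ vertices are all new, they are automatically distinct from every degree-realising vertex of $t_{l'}$, and taking the new degree-$3$ vertices as the ``third'' vertices makes the configuration \eqref{eqn:faces}--\eqref{eqn:odd} hold. Together with the untouched faces $F_{l';j}$ of $t_{l'}$ --- which still supply the degrees $4,\dots,l'$, with their third vertices still fresh because $\Gamma$ touches none of the degree-realising vertices --- this gives $(l'-3)/2+m/2=(l'+m-3)/2$ triangular faces whose first and second vertices cover exactly $\{4,\dots,l'+m\}$, all third vertices fresh, and still a degree-$3$ vertex present: precisely $\calQ_{l'+m}$. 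I would also build into $\Gamma(l',m)$ a spare triangular face and vertex stock away from all apices, so the inductive hypothesis is restored.

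The design of $\Gamma(l',m)$ with few enough vertices is the crux, and is where I expect the real work. To realise the $m$ target degrees cheaply I would use a \emph{back-to-back double fan}: a single path of new vertices laid across $\Delta^\ast$, with apices attached on \emph{both} sides, each apex fanning over a sub-path of length equal to its intended degree minus a constant. Pairing the targets as $(l'+m,l'+1),(l'+m-1,l'+2),\dots$ keeps the two fans' total lengths nearly equal, so the spine has length $\approx\tfrac12\sum_{d=l'+1}^{l'+m}d\approx \tfrac m2 l'+\tfrac{m^2}{4}$ rather than the $\tfrac m2 l'+\tfrac{3m^2}{8}$ that separate fans would cost. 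To force each apex to have \emph{exactly} its prescribed degree and to knit everything into a genuine triangulated disc with the required spare face, I would insert near the spine a small auxiliary polyhedron furnished by Algorithm \ref{alg:1}; this auxiliary piece has size $\Theta(m^2)$ (growing with $m$, whence the ``accordingly large auxiliary graph''), and the hypotheses $m\ge 14$, $m\equiv 2\pmod 4$ are what make it available via \eqref{eqn:p} and \eqref{eqn:sub} with $m/2$ an integer of the needed parity. The upshot is that one step adds at most $\tfrac m2(l'+m)-\tfrac{m^2}{4}-\tfrac{11m}{4}+\tfrac52+\epsilon m$ new vertices, the $\epsilon m$ being absorbable slack (smaller $\epsilon$ at the price of a more precise, hence larger, auxiliary piece).

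It then remains to (i) check $3$-polytopality: $t_{l'+m}$ is planar since $\Gamma(l',m)$ is planar and inserted inside a face, and it is $3$-connected because replacing a face of a $3$-connected planar graph by a triangulated disc with the same boundary cycle keeps it $3$-connected, so $t_{l'+m}$ is a $3$-polytope by Rademacher--Steinitz; and (ii) sum the per-step estimate. With $n=l+mk$ and $l'=l+m(i-1)$ at step $i$, the leading pieces telescope to $\tfrac14(n^2-l^2)$, the $+\tfrac{m^2}{4}$ and $-\tfrac{m^2}{4}$ contributions cancel along the arithmetic progression, and the residual $\tfrac52+\epsilon m-\tfrac{11m}{4}$ per step, repeated $k=(n-l)/m$ times, yields the term $\big(\tfrac{5}{2m}+\epsilon\big)n-\tfrac{11n}{4}$ plus a constant, giving \eqref{eqn:ordt}. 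The points I would be most careful about are maintaining the auxiliary ``disjoint face and vertex stock'' invariant (so that each glued $\Gamma$ neither destroys an already-achieved degree nor spoils a freshness clause of $\calQ$), and the explicit combinatorics of the double-fan-plus-auxiliary gadget: verifying it is a triangulated disc, that every apex attains exactly the prescribed degree, and that the new triangular faces sit in the configuration demanded by \eqref{eqn:faces}--\eqref{eqn:odd}.
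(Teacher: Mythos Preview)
Your extension step is the opposite of the paper's. The paper does \emph{not} keep the old degree-realising vertices fixed and glue a single gadget supplying the new \emph{high} degrees $l'+1,\dots,l'+m$ into a spare face. Instead it $m$-splits \emph{every} special face $F_{l';j}$ (for $j\ge 2$) about its third vertex $v_{l';j;3}$ --- thereby raising each of the old special degrees $4,\dots,l'$ by exactly $m$ --- and then replaces the one remaining face $F_{l';1}$ by the auxiliary polytope $s_{m+3}$ of Lemma~\ref{le:ap} (not by something coming from Algorithm~\ref{alg:1}), which contributes the new \emph{low} degrees $4,\dots,m+3$ together with the required triangle structure via property $\calR_{m+3}$. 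No spare face and no extra invariant are needed: the faces supplied by $\calQ_{l'}$ itself are exactly what gets operated on. The per-step count is then read off directly as
\[
m\cdot\frac{l'-5}{2}+\bigl(|V(s_{m+3})|-3\bigr)=\frac{ml'}{2}+\frac{m^2}{4}-\frac{11m}{4}+\frac{5}{2},
\]
and the hypotheses $m\ge 14$, $m\equiv 2\pmod 4$ are precisely the conditions $m+3\ge 17$, $m+3\equiv 1\pmod 4$ needed for Lemma~\ref{le:ap} to furnish $s_{m+3}$ of order $\frac{(m+3)^2-7(m+3)+34}{4}$.

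The gap in your proposal is that the per-step count you write down is the one the theorem requires, not the one your gadget delivers. A double fan on apices of degrees $l'+1,\dots,l'+m$ needs a spine of length about $\tfrac12\sum_{d=l'+1}^{l'+m}d=\frac{ml'}{2}+\frac{m(m+1)}{4}$, plus the $m$ apices themselves, plus the extra ``third'' vertices for your $m/2$ new $\calQ$-triangles; this totals $\frac{ml'}{2}+\frac{m^2}{4}+Cm+O(1)$ with $C>0$, not $-\tfrac{11m}{4}+\tfrac52$. Summed over $k\approx n/m$ steps the discrepancy is $\Theta(n)$, so you would only prove $|V(t_n)|\le\frac{n^2}{4}+C'n$ for some constant $C'$ strictly larger than $-\frac{11}{4}+\frac{5}{2m}$: the right leading term, but not the linear coefficient in \eqref{eqn:ordt}. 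The ``$\Theta(m^2)$ auxiliary from Algorithm~\ref{alg:1}'' you invoke has no clear function in your scheme --- your fans already manufacture the high degrees, and an extra piece cannot \emph{reduce} the vertex count. In short, your ``freeze the old, add the top'' idea is a coherent alternative that recovers the $n^2/4$ asymptotics, but as sketched it does not reach the sharper bound the theorem states; the paper's ``shift everything up by $m$, then refill the bottom with $s_{m+3}$'' is what produces the exact term $-\frac{11m}{4}+\frac{5}{2}$.
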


Theorem \ref{thm:2} will be proven in section \ref{sec:ind}.

\begin{rem}
The order \eqref{eqn:ordt} of the sequence of polyhedra in Theorem \ref{thm:2} is asymptotically optimal, in the sense that the leading term is $n^2/4$ as in \eqref{eqn:p}. The coefficient of the linear term is only slightly larger than the $-11n/4$ of \eqref{eqn:p}. This difference can be taken as small as we please if $m$ is chosen to be large, with the tradeoff of first constructing an accordingly large auxiliary graph $s_{m+3}$, as detailed in sections \ref{sec:ap} and \ref{sec:ind}.
\end{rem}

\begin{rem}
In Definition \ref{def:2}, we could have supposed instead $n$ even. Then accordingly one would have taken $j\leq (n-2)/2$ in \eqref{eqn:faces}, and the set $\{3,4,\dots, n\}$ on the RHS of \eqref{eqn:odd}. This would have produced similar setup and ideas.
\end{rem}

\subsection{Related literature, notation, and plan of the paper}

\paragraph{Related literature.}
Necessary conditions for the degree sequence of a planar graph were given in \cite{bowen1,chvata}. On the other hand, Eberhard \cite{eberha} proved that any degree sequence where $q=3p-6$ ($p,q$ being vertex and edge counts respectively) may be made planar by inserting a sufficiently large number of $6$'s. There have been numerous generalisations and extensions since, see e.g. \cite{barnet,badagr,fisher,grunba,jendr2,jendr1}. In \cite{hhrt66}, the authors determine the sequences for regular, planar graphs. This was extended in \cite{schhak} to sequences with highest and lowest valencies differing by one or two.


\paragraph{Notation.}
All graphs that appear contain no loops and multiple edges. The vertex and edge sets of a graph $G$ are denoted by $V(G)$ and $E(G)$ respectively. The order and size of $G$ are the numbers $|V(G)|$ and $|E(G)|$. The degree or valency $\deg_G(v)$ of a vertex $v$ counts the number of vertices adjacent to $v$ in $G$. We use the shorthand $\deg(v)$ when $G$ is clear. The degree sequence of $G$ is the set of all vertex valencies.
\\
We write $G\cong H$ when $G,H$ are isomorphic graphs, and $H\prec G$ when $H$ is (isomorphic to) a subgraph of $G$.
\\
A graph of order $k+1$ or more is said to be $k$-connected if removing any set of $k-1$ or fewer vertices produces a connected graph.
\\
Regions of a $2$-connected planar graph are cycles of length $i$ ($i$-gons) \cite[Proposition 4.26]{dieste}. For these graphs, the terms `region' and `face' are interchangeable. The $i$-gonal faces will be denoted by their sets of $i$ vertices. If $\{a,b,c\}$ is a triangle, we call \textit{splitting} the operation of adding a vertex $d$ and edges $da, db, dc$.


\paragraph{Plan of the paper.}
Theorem \ref{thm:1} is proven in sections \ref{sec:lb} (lower bound \eqref{eqn:p}), \ref{sec:sm} (cases $n\leq 13$) and \ref{sec:alg1} (Algorithm \ref{alg:1} for $n\geq 14$). Section \ref{sec:ap} is about an application of a similar flavour, that we can tackle via a minor modification of Algorithm \ref{alg:1}. The theory of section \ref{sec:ap} will also be useful in section \ref{sec:ind} to prove Theorem \ref{thm:2}.
\\
Appendix \ref{appa} presents another way to think about Algorithm \ref{alg:1}. Appendix \ref{appb} contains figures of graphs mentioned in sections \ref{sec:sm} and \ref{sec:alg1}.

\subsection{Acknowledgements}
The author was supported by Swiss National Science Foundation project 200021\_184927.


\section{The lower bound}
\label{sec:lb}
In this section we prove \eqref{eqn:p}.
\begin{lemma}
	\label{le:lb}
Let $n\geq 3$ and $G(n)$ be a $3$-polytopal graph with at least one vertex of degree $i$, for every $3\leq i\leq n$. Then its order is at least
\begin{equation}
\label{eqn:6}
p(n)\geq\left\lceil\frac{n^2-5n+30}{6}\right\rceil.
\end{equation}
Moreover, as soon as $n\geq 8$, we also have
\begin{equation}
\label{eqn:4}
p(n)\geq\left\lceil\frac{n^2-11n+62}{4}\right\rceil.
\end{equation}
\end{lemma}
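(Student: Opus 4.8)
The plan is to combine a counting of edges with Euler's formula, exploiting that a $3$-polytope is a simple planar graph. Let $G$ have $p$ vertices, $q$ edges, and $f$ faces, with $p-q+f=2$. Since every face of a simple $3$-connected planar graph is bounded by a cycle of length at least $3$, double counting incidences between edges and faces gives $2q\geq 3f$, hence $q\leq 3p-6$. On the other hand, $G$ has at least one vertex of each degree $3,4,\dots,n$, so the sum of these guaranteed degrees is a lower bound for $2q$. The remaining $p-(n-2)$ vertices each have degree at least $3$ (by $3$-connectivity). Therefore
\begin{equation}
\label{eqn:deglb}
2q \;\geq\; \sum_{i=3}^{n} i \;+\; 3\bigl(p-(n-2)\bigr)\;=\;\frac{n(n+1)}{2}-3 \;+\;3p-3(n-2).
\end{equation}
Combining with $2q\leq 6p-12$ and solving for $p$ yields, after simplification, a bound of the shape $p\geq (n^2-5n+30)/6$, which is \eqref{eqn:6}; taking the ceiling is legitimate since $p$ is an integer.

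For the sharper bound \eqref{eqn:4}, valid once $n\geq 8$, the idea is that the crude estimate ``all other vertices have degree $\geq 3$'' is wasteful: a vertex of large degree $i$ forces many neighbours, and these neighbourhoods cannot all be disjoint, but more to the point one can get a better handle by counting how many \emph{low-degree} vertices are forced. I would argue that the vertex of degree $n$ has $n$ distinct neighbours; together with that vertex we already have $n+1$ vertices, but we still need vertices realising degrees $3,4,\dots,n-1$ that need not be among those neighbours. The efficient way to encode the tradeoff is to split the vertex set by degree and re-run the Euler count keeping a better lower bound on $\sum_v \deg(v)$: instead of crediting only $n-2$ special vertices, observe that to host a vertex of degree $i$ one typically must ``pay'' for additional vertices in a way that, summed over $i$ up to $n$, contributes a quadratic term with leading coefficient $1/4$ rather than $1/6$. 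Concretely, I expect the argument to count edges incident to the set $S$ of the $n-2$ prescribed vertices, bound $|E(S,V\setminus S)|$ and $|E(S,S)|\leq 3|S|-6$ (since the subgraph on $S$ is planar), and deduce that $V\setminus S$ is large.

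The main obstacle is getting the constant in the linear and constant terms exactly right so that \eqref{eqn:4} holds \emph{with the ceiling} and precisely from $n\geq 8$ onward, not merely asymptotically — in particular, checking that the parity/rounding in $\lceil(n^2-11n+62)/4\rceil$ matches the combinatorial count, and verifying the small cases near $n=8$ by hand against Table \ref{tab:1}. A secondary subtlety is justifying the improved inequality on $\sum_v\deg(v)$ rigorously: one must ensure no double-counting when several high-degree vertices share neighbours, which is where $3$-connectivity and planarity (via $q\le 3p-6$ applied to induced subgraphs) do the real work. Once the right inequality is in hand, both \eqref{eqn:6} and \eqref{eqn:4} follow by the same elementary manipulation of Euler's formula, and the lemma is proved by taking the maximum of the two bounds, which is \eqref{eqn:4} for $n\geq 8$.
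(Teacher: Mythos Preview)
Your argument for \eqref{eqn:6} is correct and is exactly what the paper does: handshaking plus $q\le 3p-6$.

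For \eqref{eqn:4} your outline has the right shape but two concrete gaps. First, you never say how to bound $|E(S,V\setminus S)|$; the missing ingredient is the planar bipartite bound $|E(S,V\setminus S)|\le 2p-4$, which together with $|E(S,S)|\le 3|S|-6$ gives
\[
\sum_{v\in S}\deg(v)=2|E(S,S)|+|E(S,V\setminus S)|\le 2p+6|S|-16.
\]
This is precisely the inequality of Bowen and Chv\'atal that the paper simply cites as \eqref{eqn:bc}. Second, your choice $S=\{$the $n-2$ prescribed vertices$\}$ is suboptimal and does \emph{not} yield the constant $62$: plugging $|S|=n-2$ and $\sum_{i=3}^{n}i$ into the displayed inequality gives only $p\ge(n^2-11n+50)/4$. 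The point is that each vertex you include in $S$ costs $6$ on the right-hand side, so including the vertices of degrees $3,4,5$ hurts you. The paper takes $S$ to be the $n-5$ vertices of degrees $6,7,\dots,n$ (equivalently $k=n-5$ in \eqref{eqn:bc}), which maximises $-m^2+11m$ over the threshold $m$ and produces exactly $(n^2-11n+62)/4$. The side condition $k\le(p+4)/3$ needed for \eqref{eqn:bc} is then checked using the already-proven \eqref{eqn:6}, and this is also where the restriction $n\ge 8$ enters, not via any case-check against Table~\ref{tab:1}.

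So your plan would go through once you (i) supply the bipartite planar bound for the cross edges and (ii) shrink $S$ to the vertices of degree at least $6$; with those fixes it becomes essentially a self-contained proof of the cited inequality \eqref{eqn:bc} specialised to the optimal $k$.
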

\begin{proof}
Let $p=p(n)$, $q=q(n)$ denote order and size of $G(n)$, and $d_j:=\deg(v_j)$. On one hand, via the handshaking lemma,
\[2q=\sum_{i=3}^{n}i+\sum_{j=n-1}^{p}d_j\geq\frac{n(n+1)}{2}-3+3(p-n+2)=\frac{(n-2)(n-3)}{2}+3p\]
where we used $3$-connectivity. On the other hand, by planarity, $q\leq 3p-6$, so that altogether
\[p\geq\frac{n^2-5n+30}{6}\]
hence \eqref{eqn:6}.
\\
By \cite{bowen1,chvata}, for any $3\leq k\leq (p+4)/3$, it holds that
\begin{equation}
\label{eqn:bc}
\sum_{i=1}^{k}d_i\leq 2p+6k-16.
\end{equation}
To optimise this lower bound for $p$, the left hand side should contain as many numbers exceeding $5$ as possible. We thus wish to take $k=n-5$, and we may do this as long as $3\leq n-5\leq (p+4)/3$, i.e.,
\[n\geq 8 \quad\text{ and }\quad p\geq 3n-19.\]
By \eqref{eqn:6}, these conditions certainly hold for all $n\geq 8$. In this case, equation \eqref{eqn:bc} with $k=n-5$ reads
\[\frac{n(n+1)}{2}-15\leq 2p+6(n-5)-16,\]
and rearranging this inequality we obtain \eqref{eqn:4}.
\end{proof}

\section{Proof of Theorem \ref{thm:1} for $n\leq 13$}
\label{sec:sm}
The polyhedra up to $8$ faces were tabulated in \cite{brdu73} and \cite{fede75}. For $4\leq n+1\leq 8$, we are looking for polyhedra with at least one $i$-gonal face for each $3\leq i\leq n$. We consult \cite[Table I]{fede75}, searching where the `Faces' column has the maximal $n-2$ non-zero entries. It is straightforward to find the ten relevant cases (numbered 1, 2, 3, 4, 5, 13, 14, 15, 46, and 47 in \cite{fede75}). Passing to the dual graphs, we obtain the ten $3$-polytopes sketched in Figure \ref{fig:small}. In particular, for $3\leq n\leq 7$, we have $p(n)=n+1$ (cf. Table \ref{tab:1}).

Next, we wish to find examples of polyhedra satisfying $\calP_n$ for $8\leq n\leq 13$. We observe that each graph in Figure \ref{fig:small}, save for the tetrahedron and square pyramid, may be obtained from a previous one via a splitting operation. Our strategy is then to apply repeated splitting on the faces of $r_{7.1}$ from Figure \ref{fig:small}, to obtain a new graph $r_n$. For $8\leq n\leq 13$, the aim is to obtain a subset of vertices of valencies $4,5,\dots,n$. In the effort to minimise the resulting graph's order, we split triangles of $r_{7.1}$ containing the maximal possible number of vertices of degree $4$ or more, ideally all three of them. 
We thereby construct the graphs of Figure \ref{fig:8-13}. Their orders match the largest of the lower bounds \eqref{eqn:6} and \eqref{eqn:4} proven in section \ref{sec:lb}. We thus complete Table \ref{tab:1}.

In the next section, we will combine the above with other ideas to write Algorithm \ref{alg:1}, proving the cases $n\geq 14$ of Theorem \ref{thm:1}.

\section{Proof of Theorem \ref{thm:1} for $n\geq 14$}
\label{sec:alg1}
\subsection{Setup}
Let $r_{14}$ be the graph sketched in Figure \ref{fig:14}.
\begin{figure}[h!]
	\centering
	\includegraphics[width=8cm,clip=false]{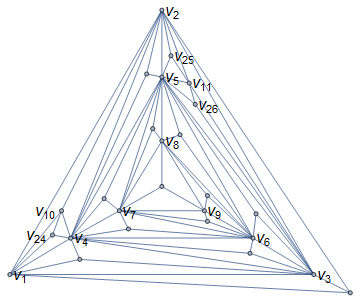}
	\caption{The $3$-polytope $r_{14}$, satisfying $\calP_{14}$.} 
	\label{fig:14}
\end{figure}
It is straightforward to check that $r_{14}$ is a polyhedron, and that the respective valencies of $v_j$, $1\leq j\leq 11$, are
\[7,9,11,13,14,12,10,8,6,4,5.\]
The order of this graph is $26$, matching the lower bound \eqref{eqn:4} in the case $n=14$, and there are vertices of degree $3$ as well. Theorem \ref{thm:1} is hence proven in this case. In the following we recursively construct the $r_n$, $n\geq 16$, of Theorem \ref{thm:1}. As for $r_{15}$ (Figure \ref{fig:15}), we obtain it from $r_{14}$ via one edge deletion and again applying the ideas of section \ref{sec:sm}.

We need a preliminary definition, the operation of $\mathit{h}$-\textit{splitting} a triangle about a vertex, for some $h\geq 1$ (see Figure \ref{fig:spl}). To $h$-split $\{a,b,c\}$ about $c$, we begin by splitting it, introducing a new vertex $c_1$. Then we split $\{a,b,c_1\}$ inserting $c_2$, and so on, until we have added the vertex $c_h$. For instance referring to Figure \ref{fig:small}, given the tetrahedron $s_3$, $1$-splitting any face yields $s_{4.2}$, while $2$-splitting any face produces $s_{5.2}$.
\begin{figure}[h!]
	\centering
	\includegraphics[width=3cm,clip=false]{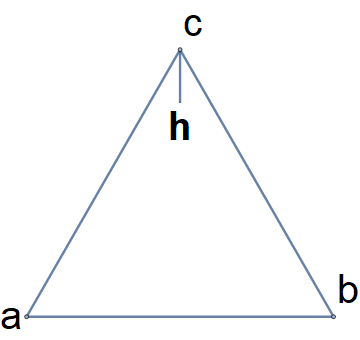}
	\caption{Notation for $h$-splitting a triangle $\{a,b,c\}$ about the vertex $c$.}
	\label{fig:spl}
\end{figure}

\subsection{The case $n\equiv 2 \pmod 4$}
We now describe the algorithm producing the $r_n$ of Theorem \ref{thm:1}, starting from the case $n\equiv 2 \pmod 4$. The remaining cases are covered in section \ref{sec:rem}.
\begin{alg}[Part I]
\label{alg:1}\

\textbf{Input.} An integer $N\geq 16$.

\textbf{Output.} A set of graphs $\{r_{n} : 16\leq n\leq N\}$, where each $r_n$ has property $\calP_n$.

\textbf{Description.}
For all $k\geq 1$, we define the graph $\text{pc}_{k}$ (`$k$-th piece') of Figure \ref{fig:piece}. 
The half-lines and numbers in bold represent $h$-splitting: for instance, face $\{a_k,b_k,e_k\}$ is split $h=4k$ times about the vertex $e_k$. Letting $\text{pc}_{0}:=r_{14}$, we label $u_0,v_0,w_0$ its vertices of degrees $10,8,6$ respectively ($v_7,v_8,v_9$ is Figure \ref{fig:14}). The vertex (of degree $3$) adjacent to these three will be denoted by $x_0$. Note that also in each $\text{pc}_{k}$, $k\geq 1$, there are vertices $u_k,v_k,w_k$ of degrees $10,8,6$, and $x_k$ of degree $3$ adjacent to them. We define
\begin{equation}
\label{eqn:cl1}
r_n:=r_{14}\cup\bigcup_{k=1}^{(n-14)/4}\text{pc}_{k}, \qquad n\geq 14, \ n\equiv 2 \pmod 4,
\end{equation}
identifying in each union operation the vertices $u_{k-1},v_{k-1},w_{k-1},x_{k-1}$ with $a_k,b_k,c_k,d_k$ respectively.
\begin{figure}[h!]
	\centering
	\includegraphics[width=12cm,clip=false]{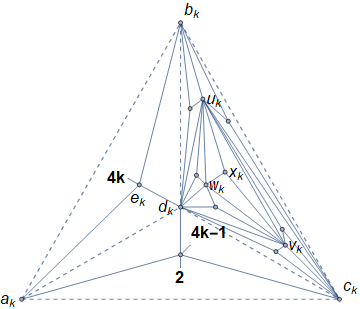}
	\caption{The graph $\text{pc}_{k}$. Dashed lines are not edges of $\text{pc}_{k}$. Half-lines and numbers in bold represent $h$-splitting.}
	\label{fig:piece}
\end{figure}
\end{alg}

\begin{proof}[Proof of Theorem \ref{thm:1} for $n\geq 14$, $n\equiv 2 \pmod 4$]
We claim that $r_n$ \eqref{eqn:cl1} has order \eqref{eqn:p} and satisfies property $\calP_n$. We argue by induction. The base case $n=14$ has already been checked. The union \eqref{eqn:cl1} is still a $3$-polytope by construction. By the inductive hypothesis, the graph
\begin{equation*}
r_{n-4}=r_{14}\cup\bigcup_{k=1}^{(n-18)/4}\text{pc}_{k}
\end{equation*}
satisfies $\calP_{n-4}$, and has order
\[p(n-4)=\frac{n^2-19n+132}{4}.\]

Turning to $r_n=r_{n-4}\cup\text{pc}_{(n-14)/4}$, we record that for $\text{pc}_k\prec r_n$, $k\geq 1$, the vertices $a_k,b_k,c_k,d_k$ have respective valencies
\begin{align*}
&\deg_{r_n}(a_k)=10+1+4k+1+2=4k+14,  \\&\deg_{r_n}(b_k)=8+1+4k+3=4k+12,  \\&\deg_{r_n}(c_k)=6+1+(4k-1)+2+5=4k+11,  \\&\deg_{r_n}(d_k)=3+1+1+(4k-1)+7=4k+13.
\end{align*}
In particular,
\begin{align*}
&\deg_{r_n}(a_{(n-14)/4})=n,   &\deg_{r_n}(b_{(n-14)/4})=n-2,  \\ &\deg_{r_n}(c_{(n-14)/4})=n-3,   &\deg_{r_n}(d_{(n-14)/4})=n-1.
\end{align*}
The degree of the remaining vertices of $r_{n-4}$ has not changed in the union. Moreover, $\deg_{r_n}(u_{(n-14)/4})=10$, $\deg_{r_n}(v_{(n-14)/4})=8$, and $\deg_{r_n}(w_{(n-14)/4})=6$. As for order, $\text{pc}_{(n-14)/4}$ introduces $12$ new vertices plus those given by the $h$-splittings, namely $4(n-14)/4+2+4(n-14)/4-1=2n-27$. It follows that
\[|V(r_n)|=p(n-4)+12+2n-27=\frac{n^2-11n+62}{4}.\]
Therefore, $r_n$ does indeed have property $\calP_n$. Moreover, it is clear from the construction that $r_{n-4}\prec r_n$.
\end{proof}

\subsection{The cases $n\equiv 1,0,3 \pmod 4$}
\label{sec:rem}
\addtocounter{thm}{-1}
\begin{alg}[Part II]
Continuing Algorithm \ref{alg:1}, for $n\geq 16$, $n\equiv 1$ (resp. $\equiv 0$) $\pmod 4$, we start by constructing $r_{n-3}$ (resp. $r_{n-2}$) as above. We then define
\begin{equation*}
r_n:=r_{n-3}\cup\text{end}_{1;n}
\end{equation*}
(resp. $r_n:=r_{n-2}\cup\text{end}_{0;n}$) with $\text{end}_{1;n}$ (resp. $\text{end}_{0;n}$) given by Figure \ref{fig:end1} (resp. \ref{fig:end0}).
\\
For $n\equiv 1$, in the union $r_n:=r_{n-3}\cup\text{end}_{1;n}$, vertices $u_{(n-17)/4},v_{(n-17)/4},w_{(n-17)/4},x_{(n-17)/4}$ are identified with $a,b,c,d$ respectively. For $n\equiv 0$, in the union $r_n:=r_{n-2}\cup\text{end}_{0;n}$, vertices $u_{(n-16)/4},v_{(n-16)/4},w_{(n-16)/4},x_{(n-16)/4}$ are identified with $\bar{a},\bar{b},\bar{c},\bar{d}$ respectively.
\\Finally, if $n\equiv 3 \pmod 4$, we take
\[r_n:=r_{n-2}\cup\text{end}_{0;n}=r_{n-5}\cup\text{end}_{1;{n-2}}\cup\text{end}_{0;n},\]
identifying $d,v,w,x$ of $\text{end}_{1;{n-2}}$ with $\bar{a},\bar{b},\bar{c},\bar{d}$ of $\text{end}_{0;n}$ respectively.

\begin{figure}[h!]
\begin{subfigure}{0.49\textwidth}
	\centering
	\includegraphics[width=5cm,clip=false]{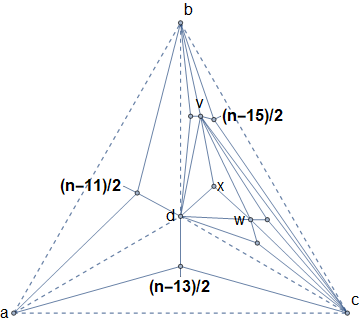}
	\caption{The graph $\text{end}_{1;n}$.}
	\label{fig:end1}
\end{subfigure}
\begin{subfigure}{0.49\textwidth}
	\centering
	\includegraphics[width=5cm,clip=false]{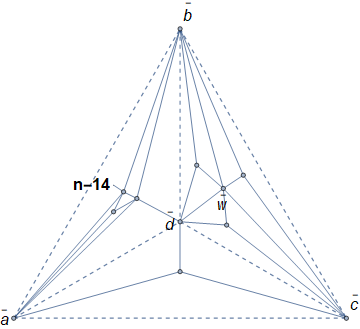}
	\caption{The graph $\text{end}_{0;n}$.}
	\label{fig:end0}
\end{subfigure}
\caption{The graphs $\text{end}_{1;n}$ and $\text{end}_{0;n}$. Half-lines and numbers in bold represent $h$-splitting.}
\end{figure}
\end{alg}

\begin{proof}[Proof of Theorem \ref{thm:1} for $n\geq 16$, $n\equiv 1,0,3 \pmod 4$]
If $n\equiv 1$, in $r_n$ one has $\deg(a)=n$, $\deg(b)=n-1$, and $\deg(c)=n-2$. The remaining valencies $\geq 4$ of $r_{n-3}$ do not change in the union $r_n$. Moreover, $\deg(d)=10$, $\deg(v)=8$, and $\deg(w)=6$. Lastly, by Theorem \ref{thm:1} in the already proven case $n\equiv 2$,
\[|V(r_n)|=p(n-3)+9+(n-11)/2+(n-13)/2+(n-15)/2=\frac{n^2-11n+62}{4}\]
as required.

Similarly, if $n\equiv 0$, in $r_n$ it holds that $\deg(\bar{a})=n$, $\deg(\bar{b})=n-1$, $\deg(\bar{c})=10$, $\deg(\bar{d})=8$, and $\deg(\bar{w})=6$. Further,
\[|V(r_n)|=p(n-2)+8+(n-14)=\left\lceil\frac{n^2-11n+62}{4}\right\rceil.\]

If $n\equiv 3$, in $r_n$ we have $\deg(a)=n-2$, $\deg(b)=n-3$, $\deg(c)=n-4$, $\deg(\bar{a})=n$, $\deg(\bar{b})=n-1$, $\deg(\bar{c})=10$, $\deg(\bar{d})=8$, and $\deg(\bar{w})=6$. Via the already proved case $n\equiv 1$,
\[|V(r_n)|=p(n-2)+8+(n-14)=\left\lceil\frac{n^2-11n+62}{4}\right\rceil.\]
The relations \eqref{eqn:sub} are clear by construction. This concludes the proof of Theorem \ref{thm:1}.
\end{proof}

\begin{rem}
The time to implement Algorithm \ref{alg:1} is quadratic in $n$, and this is optimal in the sense that the order of $r_n$ is itself quadratic \eqref{eqn:p}. Moreover, constructing $r_N$ takes no more time than obtaining all of
\[r_{14},r_{18},\dots,r_{N-(N\hspace{-0.25cm}\mod 4+2)},r_N\]
(cf. \eqref{eqn:sub}).
\end{rem}

\section{Another application}
\label{sec:ap}
The ideas of the preceding sections have solved the problem of finding for all $n$ a graph satisfying $\calP_n$ of Definition \ref{def:1}. The following lemma constitutes an application of the same ideas, and it illustrates how a minor modification of Algorithm \ref{alg:1} allows to answer similar questions. Moreover, the result of the lemma will be needed in section \ref{sec:ind}.

\begin{lemma}
	\label{le:ap}
	Let $n\geq 17$, $n\equiv 1\pmod 4$, and $H$ be an order $p$ polyhedral graph with at least one vertex of degree $i$, $3\leq i\leq n$, and at least three vertices of degree $n-1$. Then its minimal order is
	\begin{equation}
	\label{eqn:4'}
	\frac{n^2-7n+34}{4}.
	\end{equation}
\end{lemma}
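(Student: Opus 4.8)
\textbf{Proof proposal for Lemma \ref{le:ap}.}

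The plan is to mirror the two-part argument used for Theorem \ref{thm:1}: first establish the lower bound \eqref{eqn:4'} by the same handshaking-plus-Bowen--Chv\'atal counting, then exhibit a construction achieving it by a minor modification of Algorithm \ref{alg:1}. For the lower bound, let $p,q$ be the order and size of $H$ and $d_1\geq d_2\geq\cdots$ the sorted degrees. The hypothesis forces the multiset of degrees to contain $\{3,4,\dots,n\}$ \emph{and}, additionally, two further copies of $n-1$ (the ``at least three vertices of degree $n-1$'' beyond the single one already supplied by $\{3,\dots,n\}$), while every remaining vertex has degree at least $3$ by $3$-connectivity. Summing, $2q\geq \bigl(\sum_{i=3}^n i\bigr)+2(n-1)+3(p-n)=\tfrac{(n-2)(n-3)}{2}+2(n-1)+3(p-n)$, and combining with planarity $q\leq 3p-6$ gives a bound of order $n^2/6$; this is weaker than \eqref{eqn:4'}, so as in Lemma \ref{le:lb} we instead invoke \eqref{eqn:bc}. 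We take the $k$ largest degrees to be $n,n-1,n-1,n-1,n-2,n-3,\dots$ down to an appropriate cutoff — i.e. $k=n-4$ terms comprising $\{n,n-2,n-3,\dots,5\}$ together with three copies of $n-1$, whose sum is $\tfrac{n(n+1)}{2}-10+2(n-1)$ — and check that $3\leq n-4\leq (p+4)/3$ holds for $n\geq 17$ using the crude bound just obtained. Plugging into $\sum_{i=1}^k d_i\leq 2p+6k-16$ with $k=n-4$ and rearranging yields exactly \eqref{eqn:4'}; one should double-check the ceiling is unnecessary, i.e. that $n^2-7n+34\equiv 0\pmod 4$ for $n\equiv 1\pmod 4$, which it is.

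For the matching upper bound I would reuse the graph $r_{n-2}$ (note $n-2\equiv 3\pmod 4$), or more precisely the building blocks already constructed: Algorithm \ref{alg:1} produces a polyhedron with vertices of all degrees $3,4,\dots,n-2$ on roughly $\tfrac{(n-2)^2-11(n-2)+62}{4}$ vertices, and one extra vertex of degree $n-1$ can be created, together with the needed copies, by a short ``end'' gadget analogous to $\text{end}_{1;n}$ and $\text{end}_{0;n}$: attach to a triangle of $r_{n-2}$ containing the degree-$10$, degree-$8$, degree-$6$ vertices a small subgraph that, after a controlled amount of $h$-splitting, raises three chosen vertices to degree exactly $n-1$ (and produces the single vertex of degree $n$ if it is not already present) while adding precisely the deficit $\tfrac{n^2-7n+34}{4}-p(n-2)$ vertices. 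The degree bookkeeping is identical in spirit to the computations of $\deg_{r_n}(a_k),\dots,\deg_{r_n}(d_k)$ in the proof above: each $h$-splitting about a vertex raises that vertex's degree by $h$ and creates $h$ new degree-$3$ vertices, so the three target degrees and the vertex count are both linear functions of the splitting parameters, and one solves a small linear system. Planarity and $3$-connectivity of the result follow, as before, because gluing a polyhedral gadget along a facial triangle and performing splittings inside faces preserves both properties.

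The main obstacle I anticipate is purely the explicit gadget design: one must find a planar, $3$-connected patch, attachable along a single triangular face, that simultaneously (i) brings \emph{three} distinct vertices up to the common degree $n-1$ — not just one, as in the Theorem \ref{thm:1} end-pieces — and (ii) does so with the exact number of auxiliary vertices dictated by \eqref{eqn:4'}, with no slack, since the lemma claims the minimum. Getting three vertices to the \emph{same} high degree cheaply is the delicate point: naively handling them separately would cost about $3n/2$ extra vertices and overshoot, so the patch should share the splitting structure among the three targets (e.g. successive splittings of a common long triangle strip that each touch two of the three target vertices at once), keeping the overhead down to the $+34$-type constant plus the unavoidable linear term. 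Once such a patch is exhibited and its vertex count verified to hit \eqref{eqn:4'} exactly, the lemma follows by combining the two bounds; I would present the patch as a labelled figure in the style of Figures \ref{fig:end1}--\ref{fig:end0} and relegate the arithmetic to a short displayed computation.
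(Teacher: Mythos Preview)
Your lower-bound argument is the right one and matches the paper's, but there is a counting slip: the set you describe, $\{n,n-2,n-3,\dots,5\}$ together with three copies of $n-1$, has $n-2$ elements, not $n-4$. Either keep $k=n-4$ and stop the descending list at $7$ (this is what the paper does, giving the sum $n+3(n-1)+\tfrac{(n-2)(n-1)}{2}-21$ and directly $p\geq\tfrac{n^2-7n+34}{4}$ with no ceiling), or keep your set and use $k=n-2$, in which case you get $p\geq\tfrac{n^2-7n+32}{4}$ and must round up. As written, plugging your (too large) sum into \eqref{eqn:bc} with $k=n-4$ would overshoot.

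For the construction, you are making the problem harder than it is by starting from $r_{n-2}$. The paper instead starts from $r_n$ itself (recall $n\equiv 1\pmod 4$, so $r_n=r_{n-3}\cup\text{end}_{1;n}$ is already available from Algorithm~\ref{alg:1}). Since $r_n$ satisfies $\calP_n$, it already contains one vertex of every degree $3,\dots,n$, in particular one vertex of degree $n-1$ and one of degree $n$. Thus only \emph{two} further vertices of degree $n-1$ need to be manufactured, not three, and no new degree-$n$ vertex is required. The paper glues a small patch $\text{end}'_n$ onto the triangle formed by the residual $d,v,w,x$ of $\text{end}_{1;n}$ (degrees $10,8,6,3$): a single $(n-13)$-splitting inside the patch raises \emph{both} $d$ and $v$ to degree $n-1$ simultaneously, while a handful of structural vertices restore the $10,8,6$ pattern. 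The whole patch contributes exactly $6+(n-13)=n-7$ vertices, and $|V(r_n)|+(n-7)=\tfrac{n^2-11n+62}{4}+n-7=\tfrac{n^2-7n+34}{4}$, matching the lower bound on the nose. The ``delicate point'' you flag --- getting three vertices to the same high degree cheaply --- simply does not arise once you start from $r_n$.
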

\begin{proof}
Let us show the lower bound first. Similarly to Lemma \ref{le:lb}, we begin by imposing
\[6p-12\geq n+3(n-1)+\frac{(n-2)(n-1)}{2}-3+\sum_{j=n}^{p}d_j\geq\frac{n^2-n-10}{2}+3p,\]
leading to
\[p\geq\frac{n^2-n+14}{6}.\]
Thereby, for $n\geq 17$, we certainly have $3\leq n-4\leq (p+4)/3$. Applying \eqref{eqn:bc} with $k=n-4$ yields the inequality
\[n+3(n-1)+\frac{(n-2)(n-1)}{2}-21\leq 2p+6(n-4)-16,\]
and rearranging this inequality and imposing $n\equiv 1 \pmod 4$ we obtain \eqref{eqn:4'} as a lower bound.

To show the upper bound, we will actually construct a graph $s_n$ satisfying the assumptions of the present lemma, of order \eqref{eqn:4'}. We set
\begin{equation}
\label{eqn:t}
s_n:=r_{n}\cup\text{end}'_n \qquad n\geq 17, \ n\equiv 1\pmod 4
\end{equation}
where $r_n$ was constructed in Algorithm \ref{alg:1} and $\text{end}'_n$ is depicted in Figure \ref{fig:endp}.

\begin{figure}[h!]
	\centering
	\includegraphics[width=5cm,clip=false]{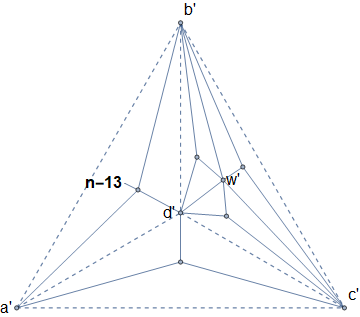}
	\caption{The graph $\text{end}'_{n}$. Dashed lines are not edges of $\text{end}'_{n}$.}
	\label{fig:endp}
\end{figure}

Since $n\equiv 1\pmod 4$,
\begin{equation*}
s_n=r_{n-3}\cup\text{end}_{1;n}\cup\text{end}'_n.
\end{equation*}
While performing the union, we identify vertices $d,v,w,x$ of $\text{end}_{1;n}$ (Figure \ref{fig:end1}) with $a',b',c',d'$ of $\text{end}'_n$ in this order. Then $s_n$ is clearly still a polyhedron. Further,
$\deg_{s_n}(a')=10+2+n-13=n-1$,  $\deg_{s_n}(b')=8+4+n-13=n-1$, $\deg_{s_n}(c')=6+4=10$, $\deg_{s_n}(d')=3+5=8$, and $\deg_{s_n}(w')=6$. Since $r_n$ has the property $\calP_n$, then $s_n$ has vertices of each degree $i$, $3\leq i\leq n$. The union with $\text{end}'_n$ has inserted two more vertices of valency $n-1$. Lastly,
\begin{equation}
|V(s_n)|=|V(r_n)|+6+(n-13)=\frac{n^2-11n+62+4n-28}{4}=\frac{n^2-7n+34}{4}.
\end{equation}
The proof of Lemma \ref{le:ap} is complete.
\end{proof}

\begin{rem}
\label{rem:ap}
We record the following property of the graph $s_n$. It plainly has degree $3$ vertices, and contains among its faces the triangles
\begin{equation}
\label{eqn:faces2}
F_{n;j}=\{v_{n;j;1}, v_{n;j;2}, v_{n;j;3}\}, \quad 1\leq j\leq(n-1)/2
\end{equation}
where $\deg(v_{n;1;1})=\deg(v_{n;1;2})=n-1$,
\[\{\deg(v_{n;j;1}) : 2\leq j\leq(n-1)/2\} \cup \{\deg(v_{n;j;2}) : 2\leq j\leq(n-1)/2\}=\{4,5,\dots,n\}\]
and
\[v_{n;j;3}\neq v_{n;i;1},v_{n;i;2},  \quad\forall j,i.\]
This property, stronger than $\calQ_n$ of Definition \ref{def:2}, shall be denoted by $\calR_n$. Indeed for $s_n$, $\calR_n$ is easily observed by construction. For instance, we may take the pairs
\begin{align*}
&(7,4), (13,12), (9,5), (14,11), \\&(4k+14,4k+12), (4k+11,4k+13) \quad \text{ for } 1\leq k\leq(n-17)/4, \\&(n,n-1), (n-2,10), (n-1,n-1), (8,6)
\end{align*}
where the notation $(a,b)$ means that if $u,v$ denote two vertices of one of the triangles $F_{n;j}$, then $\deg(u)=a$ and $\deg(v)=b$. 
\end{rem}

\section{Proof of Theorem \ref{thm:2}}
\label{sec:ind}
\subsection{Premise}
\label{sec:pre}
We first introduce the main ideas of the proof, via the following lemma.
\begin{lemma}
	\label{le:2}
	Let $t_l$ be as in Theorem \ref{thm:2}. Then we may construct a sequence of $3$-polytopes
	\[\{t_n\}_{n=l+2k, \ k\geq 1},\]
	where $t_{l+m(k-1)}\prec t_{l+mk}$ for $k\geq 1$, and each $t_n$ satisfies $\calQ_{n}$. Moreover along the sequence, for all $\epsilon>0$, one has
	\begin{equation}
	\label{eqn:op2}
	|V(t_n)|\leq\frac{n^2}{4}+\left(-1+\epsilon\right)n
	\end{equation}
	as $n\to\infty$.
\end{lemma}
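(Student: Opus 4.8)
The plan is to mimic the structure of Algorithm~\ref{alg:1}, but now the auxiliary ``pieces'' we attach are not glued onto $r_{14}$ but onto the given polytope $t_l$, and each piece must simultaneously raise the maximum degree by $2$ (going from a polytope satisfying $\calQ_{l+2(k-1)}$ to one satisfying $\calQ_{l+2k}$) and preserve the triangular-face structure of Definition~\ref{def:2}. The key point is that property $\calQ_n$ only asks for \emph{one} vertex of each degree $4,\dots,n$, sorted into pairs sitting on disjoint triangles, together with a degree-$3$ vertex; so when we pass from $n-2$ to $n$ we only need to create \emph{two} new vertices of degrees $n-1$ and $n$, lying on a new triangle $F_{n;(n-3)/2}$, while not disturbing the degrees of the vertices realising $4,\dots,n-2$. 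I would build a gadget $G_k$ (analogous to $\text{pc}_k$) which, when glued along a triangular face of $t_{n-2}$ that contains the current degree-$3$ vertex, uses one $h$-splitting chain of length $\sim n$ to manufacture a vertex of degree $n$, a second short chain (or a fixed sub-configuration) to manufacture the vertex of degree $n-1$, re-creates a fresh degree-$3$ vertex for the next step, and re-creates a triangle on which the next gadget can be attached. Since $h$-splitting only ever adds vertices inside a triangle and adds edges to its three corners, $3$-connectedness and planarity are automatically preserved, and the degrees of all previously-placed ``witness'' vertices are untouched — that is exactly why the scheme works.

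Concretely, the steps I would carry out are: (i) describe the gadget $G_k$ with its bold $h$-splitting labels chosen so that, after gluing, the two designated corner vertices attain degrees $l+2k-1$ and $l+2k$ and a designated vertex keeps/attains degree $3$; (ii) define $t_{l+2k} := t_{l+2(k-1)} \cup G_k$ with the appropriate vertex identifications, and verify by induction that it is a $3$-polytope and satisfies $\calQ_{l+2k}$ — the inductive hypothesis gives the triangles $F_{l+2(k-1);j}$ for $j\le (l+2k-5)/2$ with degree-sets $\{4,\dots,l+2k-2\}$, and the gadget supplies the one extra triangle $F_{l+2k;(l+2k-3)/2}$ carrying the pair $(l+2k-1,\,l+2k)$, with its third vertex fresh (hence $\ne$ any witness), matching \eqref{eqn:faces}--\eqref{eqn:odd}; $t_{l+2(k-1)}\prec t_{l+2k}$ is immediate; (iii) count vertices: if each $G_k$ contributes a bounded number $c$ of ``structural'' vertices plus two $h$-splitting chains of total length $\sim (l+2k) + O(1) = n+O(1)$, then $|V(t_n)| = |V(t_l)| + \sum_{k=1}^{(n-l)/2}\bigl(n_k + O(1)\bigr)$ where $n_k = l+2k$, and $\sum_{k} (l+2k) = \tfrac14(n-l)(n+l) + O(n) = \tfrac{n^2}{4} + O(n)$. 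To land on the sharp linear coefficient $-1+\epsilon$ I would be slightly more careful: arrange the chains so their combined length is $n_k - 2 + o(n_k)$ (the $-2$ because two of the chain vertices are the degree-$(n-1)$ and degree-$n$ vertices themselves, which one can alternatively view as endpoints), giving $\sum_k (n_k - 2) = \tfrac{n^2}{4} - n + O(n^{?})$ up to lower order, and absorb the bounded $+c$ per step plus any slack into the $\epsilon n$ term, valid for $n$ large. The $m$-periodicity promised in Theorem~\ref{thm:2} is recovered afterward by simply composing $m/2$ consecutive steps of this lemma and invoking \eqref{eqn:op2} with a smaller $\epsilon'$.

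The main obstacle I anticipate is the bookkeeping that makes the gadget both \emph{locally valid} (a genuine sub-polytope glued along a triangle, no multi-edges created when the chains wrap around, the new degree-$3$ vertex genuinely of degree $3$ and not accidentally incident to an $h$-split chain) and \emph{globally efficient} (the two chains of lengths summing to essentially $n_k$, rather than, say, $2n_k$, which would double the leading constant and ruin \eqref{eqn:op2}). This is exactly the kind of delicate picture-driven argument that Algorithm~\ref{alg:1} already handled via Figures~\ref{fig:piece}, \ref{fig:end1}, \ref{fig:end0}, \ref{fig:endp}, so I would expect to exhibit an explicit figure for $G_k$ with all bold labels and then simply read off the four designated degrees and the vertex count, paralleling the computation $|V(r_n)| = p(n-4) + 12 + (2n-27)$ done in the proof of Theorem~\ref{thm:1}. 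A secondary subtlety is the parity interplay between the fixed odd $l$, the increment $2$, and the fact that $\calQ$ was only defined for odd indices: since $l$ is odd and we add $2$ each time, every $n=l+2k$ is odd, so Definition~\ref{def:2} applies verbatim at every step — but I would state this explicitly to avoid an off-by-one in the range of $j$ in \eqref{eqn:faces}.
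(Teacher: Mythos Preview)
Your route is genuinely different from the paper's, and as written it falls short of the coefficient $-1$ in \eqref{eqn:op2}.

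The paper does \emph{not} attach a single local gadget creating two fresh top-degree witnesses while leaving the remaining witnesses untouched. Instead, at step $k$ it $2$-splits \emph{each} witness triangle $F_{l+2(k-1);j}$, $j\ge 2$, about its non-witness vertex $v_{l+2(k-1);j;3}$, thereby shifting the degree of \emph{every} one of those witnesses up by exactly $2$. The remaining triangle $F_{l+2(k-1);1}$ is replaced by a fixed $7$-vertex graph $S$: this raises its two witnesses (formerly of degrees $4,5$) to $6,7$, while $S$ supplies two new vertices of degrees $4,5$ on a fresh triangle (the new $F_{l+2k;1}$). No ``witness-free attachment face'' is ever needed, precisely because every witness face is modified. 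The cost of step $k$ is $l+2(k-1)-5$ for the $2$-splittings plus $|V(S)|-3=4$ for $S$, i.e.\ exactly $n_k-3$; summing gives $\sum_k(n_k-3)=n^2/4-n+O(1)$, which is \eqref{eqn:op2}.

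Your scheme has two concrete problems. First, the arithmetic: with $K=(n-l)/2$ and $n_k=l+2k$ one gets
\[
\sum_{k=1}^{K}(n_k-2)=\frac{(n-l)(n+l-2)}{4}=\frac{n^2}{4}-\frac{n}{2}+O(1),
\]
not $n^2/4-n$; to reach the coefficient $-1$ the per-step cost must be $n_k-3$, and you have not exhibited a gadget achieving this. Second, and more structurally, after an $h$-split of a triangle about one corner, \emph{every} new triangular face contains one of the other two corners---which in your setup are precisely the new witnesses of degrees $n_k$ and $n_k-1$. Hence there is no face inside the gadget on which to attach $G_{k+1}$ without raising a witness degree; the promised step ``re-creates a triangle on which the next gadget can be attached'' therefore requires extra vertices and bookkeeping that you have not supplied. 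The same difficulty arises already at $k=1$: $\calQ_l$ guarantees a degree-$3$ vertex, but any triangular face through it contains two of its three neighbours, and nothing in $\calQ_l$ prevents those neighbours from being witnesses. The paper's global-shift idea sidesteps both issues at once.
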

Here we prove the first statement, relegating the proof of the second one \eqref{eqn:op2} to section \ref{sec:end}. The base case is just the assumption of the lemma. We take the inductive hypothesis that $t_{l+2(k-1)}$ verifies property $\calQ_{l+2(k-1)}$ of Definition \ref{def:2}. Suppose that at step $k$, $1\leq k\leq N$, we were to perform $2$-splitting on each triangle \[F_{l+2(k-1);j}, \quad 1\leq j\leq(l+2(k-1)-3)/2\]
\eqref{eqn:faces} about vertex $v_{l+2(k-1);j;3}$. That would raise by $2$ the degrees of a set of vertices of valencies $4,5,\dots, l+2(k-1)$. However, in the resulting graph, we would not be guaranteed vertices of degrees $4$ and $5$. Therefore, the $2$-splitting is taken only for
\[
2\leq j\leq (l+2(k-1)-3)/2
\]
(i.e., all these faces save $F_{l+2(k-1);1}$). We replace $F_{l+2(k-1);1}$ with the graph $S$ of Figure \ref{fig:op2}, identifying $v_{l+2(k-1);1;1}, v_{l+2(k-1);1;2}, v_{l+2(k-1);1;3}$ with $a,b,c$ respectively.

\begin{figure}[h!]
	\centering
	\includegraphics[width=3cm,clip=false]{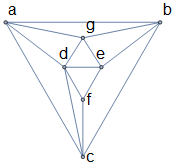}
	\caption{The graph $S$.}
	\label{fig:op2}
\end{figure}

In this way, the valencies of $v_{l+2(k-1);1;1}$ and $v_{l+2(k-1);1;2}$ increase by $2$, and they belong to the new triangle $F_{l+2k;(l+2k-3)/2}:=\{a,b,g\}$. Two vertices of valencies $4$ and $5$ are introduced, namely $d,e$. Moreover, these two belong to a triangle $F_{l+2k;1}:=\{d,e,f\}$. We also set
\[F_{l+2k;j}:=\{v_{l+2(k-1);j;1},v_{l+2(k-1);j;2},v_{l+2(k-1);j;3}''\}, \qquad 2\leq j\leq (l+2k-3)/2-1\]
where $v_{l+2(k-1);j;3}''$ is the second of the two vertices introduced in the $2$-splitting of $F_{l+2(k-1);j}$. We have thus constructed $t_{l+2k}$ satisfying $\calQ_{l+2k}$ as claimed.

In section \ref{sec:end}, it will be shown that the above produces a sequence of graphs $t_{n}$ verifying \eqref{eqn:op2}. Our goal is to optimise this method to asymptotically improve this upper bound on $|V(t_n)|$.

\subsection{The algorithm}
In section \ref{sec:pre} we have used the polyhedron $S$ as it has the $2$ triangles $\{a,b,g\}$ and $\{d,e,f\}$, with vertices of appropriate valencies $\deg(d)=5$ and $\deg(e)=\deg(a)=\deg(b)=4$. A refinement of this idea is then to pick $m$ even and use in place of $S$ a polyhedron containing $(m+2)/2$ triangles, where two vertices from each form a set of vertices of degrees
\[m+3,m+2,m+2,m+2,m+1,m,m-1,\dots,5,4.\]
We have seen in Remark \ref{rem:ap} that for $m\geq 14$, $s_{m+3}$ has the desired property $\calR_{m+3}$.

\begin{alg}\
	\label{alg:2}
	
	\textbf{Input.} A $3$-polytopal graph $t_l$ satisfying $\calQ_l$, an integer $m\geq 14$, $m\equiv 2 \pmod 4$, and a positive integer $N$.
	
	\textbf{Output.} A set of graphs $\{t_n=t_{l+mk} : 1\leq k\leq N\}$, each satisfying property $\calQ_n$, and $t_{l+m(k-1)}\prec t_{l+mk}$. These are the first $N$ entries of a sequence verifying \eqref{eqn:ordt}.
	
	\textbf{Description.} Starting from $t_l$, we perform steps $1\leq k\leq N$ as follows. The graph $t_{l+m(k-1)}$ verifies $\calQ_{l+m(k-1)}$, i.e. it has $(l+m(k-1)-3)/2$ triangular faces
	\begin{equation*}
	F_{l+m(k-1);j}=\{v_{l+m(k-1);j;1}, v_{l+m(k-1);j;2}, v_{l+m(k-1);j;3}\}, \quad 1\leq j\leq(l+m(k-1)-3)/2
	\end{equation*}
	where
	\begin{multline*}
	\{\deg(v_{l+m(k-1);j;1}) : 1\leq j\leq(l+m(k-1)-3)/2\}\ \\\cup\ \{\deg(v_{l+m(k-1);j;2}) : 1\leq j\leq(l+m(k-1)-3)/2\}=\{4,5,\dots,l+m(k-1)\}.
	\end{multline*}
	At step $k$, we $m$-split the $F_{l+m(k-1);j}$ about $v_{l+m(k-1);j;3}$, for $j=2,\dots,(l+m(k-1)-3)/2$. Next, we replace the remaining triangle $F_{l+m(k-1);1}$ with a copy of $s_{m+3}$ from Lemma \ref{le:ap}, identifying $v_{l+m(k-1);1;1}$ and $v_{l+m(k-1);1;2}$ with two adjacent vertices of degree $m+2$ in $s_{m+3}$. This is well defined, since $m+3\geq 17$, $m+3\equiv 1\pmod 4$, and $s_{m+3}$ has property $\calR_{m+3}$ (Remark \ref{rem:ap}). In this way, we have increased by $m$ the degrees of a set of vertices of valencies $4,5,\dots,l+m(k-1)$, and we have introduced $m$ new vertices of degrees $\{4,5,\dots,m+3\}$ (applying Lemma \ref{le:ap}). Moreover, these new vertices, together with $v_{l+m(k-1);1;1}$ and $v_{l+m(k-1);1;2}$, belong pairwise to $(m+2)/2$ triangles, by the construction of $s_{m+3}$ (Remark \ref{rem:ap}). We have thus obtained $t_{l+mk}$ satisfying $\calQ_{l+mk}$. Relation \eqref{eqn:subt} follows by construction.
\end{alg}

\subsection{Concluding the proofs of Theorem \ref{thm:2} and Lemma \ref{le:2}}
\label{sec:end}
It remains to show \eqref{eqn:ordt}. In Algorithm \ref{alg:2}, starting with $|V(t_l)|$ vertices, at step $k\geq 1$ we have inserted $m$ of them for each of $(l-3+m(k-1))/2-1$ $m$-splittings, plus $|V(s_{m+3})|-3$ for the operation on $F_{l+m(k-1);1}$ (i.e. replacing this triangle with a copy of $s_{m+3}$). Therefore,
\begin{align*}
|V(t_{n})|&=|V(t_l)|+\sum_{k=1}^{(n-l)/m}\left[m\cdot\frac{l-5+m(k-1)}{2}+|V(s_{m+3})|-3\right]
\\&=|V(t_l)|+\frac{m^2}{2}\sum_{k=1}^{(n-l)/m}(k-1)+\frac{m(l-5)}{2}\cdot\frac{n-l}{m}+(|V(s_{m+3})|-3)\cdot\frac{n-l}{m}
\\&\leq\frac{n^2}{4}+\left(\frac{4|V(s_{m+3})|-m^2-10m-12}{4m}+\epsilon\right)n,
\end{align*}
where as $n\to\infty$ we have bounded the constant terms via $\epsilon n$, for all $\epsilon>0$. Substituting the value \eqref{eqn:4'}, we have as $n\to\infty$
\[|V(t_{n})|\leq\frac{n^2}{4}+\left(\frac{-11m+10}{4m}+\epsilon\right)n\]
as required. The proof of Theorem \ref{thm:2} is complete.

Note that $m$ was chosen so that $m+3\geq 17$ and $m+3\equiv 1\pmod 4$ hold, in order to minimise the quantity
\[\frac{4|V(s_{m+3})|-m^2-10m-12}{4m}\]
(Lemma \ref{le:ap}) so that ultimately the coefficient of $n$ in \eqref{eqn:ordt} is as small as this method allows.

In Lemma \ref{le:2}, we had fixed instead $m=2$, and used the graph $S$ (Figure \ref{fig:op2}) in place of $s_{m+3}$. Since $|V(S)|=7$, we get \eqref{eqn:op2}. This concludes the proof of Lemma \ref{le:2}.

\paragraph{Future directions.}
The ideas behind Algorithms \ref{alg:1} and \ref{alg:2} are readily generalisable to tackle problems of a similar flavour, as shown for instance in section \ref{sec:ap}. The constructions, or a slight modification thereof, allow to minimise the total number of vertices of a graph, where certain valencies have been fixed.

\appendix
\section{Another way to present Algorithm \ref{alg:1}}
\label{appa}
The following construction of $r_n$ may be more intuitive than Algorithm \ref{alg:1}, albeit less apt for implementation. We begin by fixing $n\geq 9$ and defining a polyhedron $A(n)$ of order $n-5$ as follows. Given an initial triangle $\{v_1,v_2,v_3\}$, we add in order $v_4,v_5,\dots,v_{n-5}$ together with edges
\[v_iv_{i-1}, \quad v_iv_{i-2}, \quad v_iv_{i-3}, \qquad i=4,5,\dots,n-5\]
(splitting operations). 
The resulting $A(n)$ for $n=14, 21$ are illustrated in Figure \ref{fig:14,21}.

\begin{figure}[h!]
	\centering
	\begin{subfigure}{\numtwo}
		\centering
		\includegraphics[width=4.0cm,clip=false]{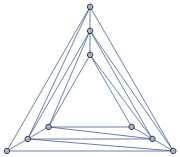}
		\caption{$A(14)$}
	\end{subfigure}
	\hspace{1.75cm}
	\begin{subfigure}{\numtwo}
		\centering
		\includegraphics[width=4.0cm,clip=false]{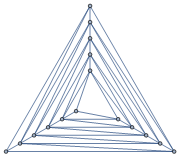}
		\caption{$A(21)$}
	\end{subfigure}
	\caption{Illustration of the construction $A(n)$.}
	\label{fig:14,21}
\end{figure}

We note that $A(9)\cong r_3$, $A(10)\cong r_{4.2}$, $A(11)\cong r_{5.2}$, and $A(12)\cong r_{6.3}$ from Figure \ref{fig:small}. For all $n\geq 11$, the degree sequence of these graphs is
\[3,4,5,6^{n-11},5,4,3\]
where the superscript is a shorthand indicating quantities of repeated numbers, e.g. $6^{n-11}$ means $n-11$ vertices of degree $6$.

Assuming $n\geq 14$, we pass from $A(n)$ to another polyhedron $B(n)$ in the following way. Firstly, we apply the splitting operation to every face of $A(n)$. This has the effect of doubling all previous vertex degrees, and introducing $2(n-5)-4$ new ones ($A(n)$ is a triangulation -- it is maximal planar) so that the sequence is now
\[6,8,10,12^{n-11},10,8,6,3^{2n-14}.\]
Secondly, we split either of the two faces containing $v_1,v_4$. This yields in particular a vertex of degree $4$. To obtain one of degree $5$, we split the two faces that are adjacent to one another and that contain $v_2,v_5$ and $v_3,v_5$ respectively. For instance, in  Figure \ref{fig:14}, inserting $v_{24}$ raises the valency of $v_{10}$ to $4$, and inserting $v_{25},v_{26}$ raises the valency of $v_{11}$ to $5$. The constructed polyhedron shall we denoted by $B(n)$. Its order is
\begin{equation}
|V(B(n))|=(n-5)+(2(n-5)-4)+3=3n-16,
\end{equation}
and its sequence
\begin{equation}
\label{eqn:seqb}
7,9,11,13,14,12,12^{n-14},10,8,6,5,4,3^{2n-13}.
\end{equation}
In \eqref{eqn:seqb}, we have purposefully isolated a subset of vertices of degree $12$
\begin{equation}
\label{eqn:v'}
V':=V'(B(n))\subset V(B(n)),
\end{equation}
with cardinality $n-14$, keeping the remaining one aside.

We have $B(14)\cong r_{14}$ (Figure \ref{fig:14}). For $n\geq 16$ our strategy is outlined as follows. The vertices in $A(n)$ have been designated to eventually correspond to ones of degree $6$ or higher in $r_n$. Following the ideas of section \ref{sec:sm}, $B(n)$ has been constructed by splitting faces of $A(n)$ that contain three of these vertices of high degree. Next, starting from $B(n)$, we split faces containing two of them. We take four vertices from $V'$ of \eqref{eqn:v'}. Via nine repeated splitting operations, we aim to raise their degrees to $15,16,17,18$. Similarly, the next four shall become of degrees $19,20,21,22$, and so forth $4k+11,4k+12,4k+13,4k+14$. 
This procedure ends when there remain either $2,3,0$, or $5$ vertices in $V'$, depending on whether $n\equiv 0,1,2$, or $3 \pmod 4$. For $n\equiv 2$ the algorithm stops here. In the other cases $n\equiv 0,1,3$, we look to apply 
further triangle splittings, to obtain vertices of degrees
\[n-1,n, \qquad n-2,n-1,n, \qquad\text{ or }\ n-4,n-3,n-2,n-1,n\]
respectively. The details have already been presented in section \ref{sec:alg1}.


\section{Illustrations for small cases of Theorem \ref{thm:1}}
\label{appb}
Here we sketch certain $3$-polytopes mentioned in sections \ref{sec:sm} and \ref{sec:alg1}.

\begin{figure}[h!]
	\begin{subfigure}{\numone}
		\centering
		\includegraphics[width=\widone,clip=false]{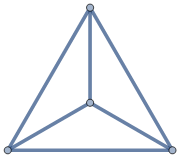}
		\caption{$r_{3}$}
	\end{subfigure}
	\begin{subfigure}{\numone}
		\centering
		\includegraphics[width=\widone,clip=false]{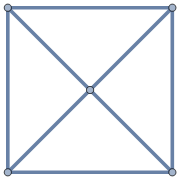}
		\caption{$r_{4.1}$}
	\end{subfigure}
	\begin{subfigure}{\numone}
		\centering
		\includegraphics[width=\widone,clip=false]{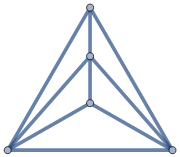}
		\caption{$r_{4.2}$}
	\end{subfigure}
	\begin{subfigure}{\numone}
		\centering
		\includegraphics[width=\widone,clip=false]{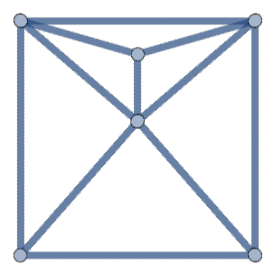}
		\caption{$r_{5.1}$}
	\end{subfigure}
	\begin{subfigure}{\numone}
		\centering
		\includegraphics[width=\widone,clip=false]{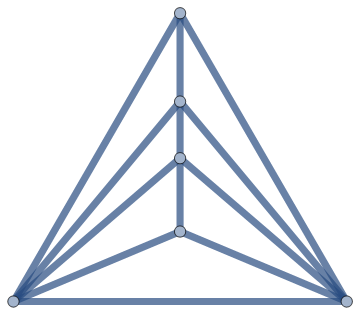}
		\caption{$r_{5.2}$}
	\end{subfigure}
\begin{subfigure}{\numone}
	\centering
	\includegraphics[width=\widone,clip=false]{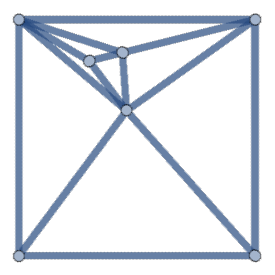}
	\caption{$r_{6.1}$}
\end{subfigure}
\begin{subfigure}{\numone}
	\centering
	\includegraphics[width=\widone,clip=false]{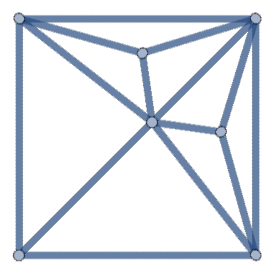}
	\caption{$r_{6.2}$}
\end{subfigure}
\begin{subfigure}{\numone}
	\centering
	\includegraphics[width=\widone,clip=false]{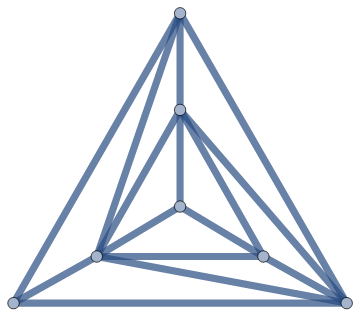}
	\caption{$r_{6.3}$}
\end{subfigure}
\begin{subfigure}{\numone}
	\centering
	\includegraphics[width=\widone,clip=false]{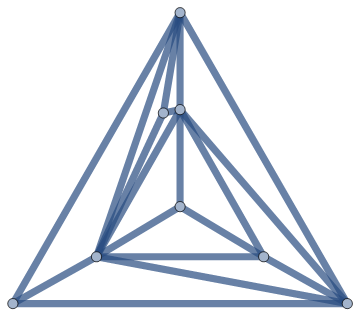}
	\caption{$r_{7.1}$}
\end{subfigure}
\begin{subfigure}{\numone}
	\centering
	\includegraphics[width=\widone,clip=false]{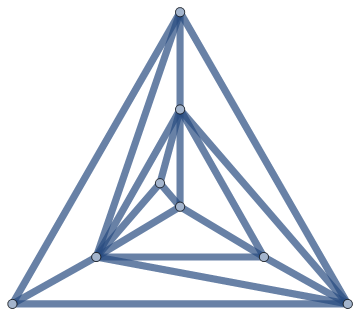}
	\caption{$r_{7.2}$}
\end{subfigure}
	\caption{The $10$ polyhedra with $p(n)=n+1$.}
	\label{fig:small}
\end{figure}

\begin{figure}[h!]
	\begin{subfigure}{\numtwo}
		\centering
		\includegraphics[width=\widtwo,clip=false]{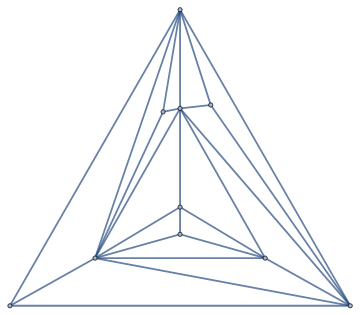}
		\caption{$r_{8}$}
	\end{subfigure}
	\begin{subfigure}{\numtwo}
		\centering
		\includegraphics[width=\widtwo,clip=false]{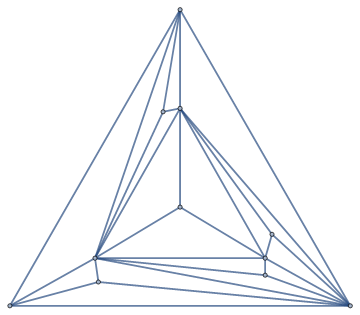}
		\caption{$r_{9}$}
	\end{subfigure}
	\begin{subfigure}{\numtwo}
		\centering
		\includegraphics[width=\widtwo,clip=false]{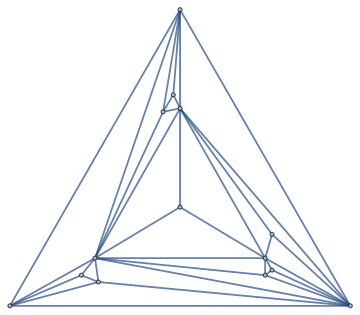}
		\caption{$r_{10}$}
	\end{subfigure}
	\begin{subfigure}{\numtwo}
		\centering
		\includegraphics[width=\widtwo,clip=false]{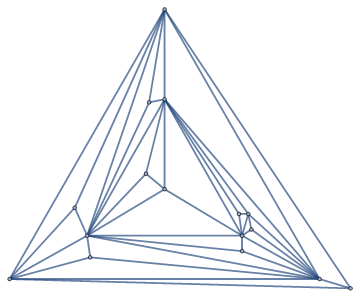}
		\caption{$r_{11}$}
	\end{subfigure}
	\begin{subfigure}{\numtwo}
		\centering
		\includegraphics[width=\widtwo,clip=false]{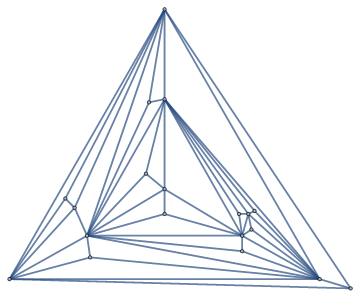}
		\caption{$r_{12}$}
	\end{subfigure}
	\begin{subfigure}{\numtwo}
		\centering
		\includegraphics[width=\widtwo,clip=false]{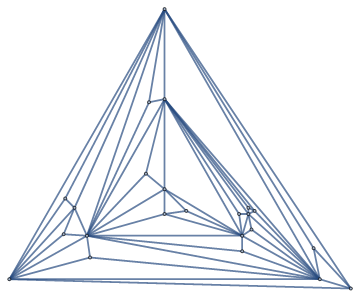}
		\caption{$r_{13}$}
	\end{subfigure}
	\caption{Examples of $3$-polytopes $r_n$ satisfying $\calP_n$, for $8\leq n\leq 13$.}
\label{fig:8-13}
\end{figure}
\begin{figure}[h!]
	\centering
	\includegraphics[width=7cm,clip=false]{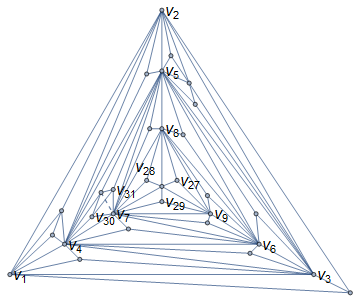}
	\caption{A graph $r_{15}$ satisfying $\calP_{15}$, obtained from $r_{14}$ (Figure \ref{fig:14}) by deleting the dashed edge and inserting $v_j$, $27\leq j\leq 31$, and their incident edges.}
	\label{fig:15}
\end{figure}

\clearpage
\bibliographystyle{plain}
\bibliography{bibgra}

\end{document}